%
%
%
%
%
%
%
\documentclass[%
 reprint,
 amsmath,amssymb,
 aps,
]{revtex4-2}

\usepackage{graphicx}
\usepackage{dcolumn}
\usepackage{bm}
\usepackage{mathtools}
\usepackage{amsthm}
\usepackage{amsmath}
\usepackage{bbm}
\usepackage{bbold}



\newtheorem{theorem}{Theorem}
\newtheorem{lemma}{Lemma}
\newtheorem{proposition}{Proposition}
\newtheorem{corollary}{Corollary}

\newcommand{\naturals}{\mathbb{N}}
\newcommand{\integers}{\mathbb{Z}}

\newcommand{\probability}{\mathbb{P}}
\newcommand{\expectation}{\mathbb{E}}

\newcommand{\geom}{\text{Geom}}

\newcommand{\Uc}{\mathcal{U}}
\newcommand{\Rc}{\mathcal{R}}
\newcommand{\Tc}{\mathcal{T}}
\newcommand{\Vc}{\mathcal{V}}

\newcommand{\Xc}{\mathcal{X}}
\newcommand{\Yc}{\mathcal{Y}}

\newcommand{\one}{\mathbb{1}}

\begin{document}

\preprint{APS/123-QED}

\title{Using Restart Sequences to Determine Beneficial First Passage Under Restart}

\author{Jason M. Flynn}
 \email{jasonmflynn@ufl.edu}
\author{Sergei S. Pilyugin}%
 \email{pilyugin@ufl.edu}
\affiliation{%
 University of Florida,\\
 Department of Mathematics
}%

\date{\today}

\begin{abstract}
    First Passage (FP) processes are utilized widely to model phenomena in many areas of mathematical applications, from biology to computer science. Introducing a mechanism to restart the parent process can alter the first passage characteristics, and the properties of the resulting First Passage Under Restart (FPUR) process have been subject to much recent investigation. Here we seek to more fully characterize whether a discrete FP process can have its mean hitting time reduced through the implementation of restart by analyzing a construct we call the restart sequence, which is determined solely by the distribution of the FP process.
\end{abstract}

\maketitle


\section{The initial framework}
From \cite{flynn2021first}, we recall that for a given First Passage (FP) process with hitting time denoted by $\Uc$, we are interested in whether introducing a particular restart mechanism can produce a First Passage under Restart (FPUR) process with a reduced mean FP time (we call this case \underline{beneficial restart}). We define the FPUR restart process's hitting time, $\Tc$, recursively by
\begin{equation}\label{def:T}
    \Tc \coloneqq \begin{cases}
       \Uc & \text{if } \Rc > \Uc \\
       \Rc + \Tc^* & \text{if } \Rc \le \Uc
    \end{cases},
\end{equation}
where $\Tc^*$ is an iid copy of $\Tc$ and $\Rc$ represents our `restart time.' In general, $\Rc$ is a random variable that can be drawn from an arbitrary distribution. We must be aware, however, of the quantity $p_r \coloneqq \probability(\Rc\le\Uc)$, which denotes the probability that, on an individual foray, the FPUR process restarts before the underlying FP process finishes. When $p_r=1$ we see that the FPUR process will, with probability 1, restart before reaching its terminal state, meaning that $\Tc$ is not finite and that the FPUR process will never resolve. We call this \underline{preemptive restart}, and it is clearly a property of the interplay between the distributions of $\Uc$ and $\Rc$, rather than a property solely of $\Rc$.

There are multiple approaches to deriving formulas for $\expectation[\Tc]$ as seen in \cite{flynn2021first}, but perhaps the simplest can be derived directly from (\ref{def:T}) as follows.
\begin{align*}
    \expectation[\Tc] & = \expectation[\Uc \mid \Uc < \Rc]\cdot \probability(\Uc<\Rc) \\
    & \quad \quad + \expectation[\Rc + \Tc^* \mid \Rc \le \Uc]\cdot\probability(\Rc\le\Uc) \\
    & = \expectation[\Uc\mid\Uc<\Rc](1-p_r) + \expectation[\Rc\mid\Rc\le\Uc]p_r + \expectation[\Tc^*]p_r 
\end{align*}
Recalling that $\Tc^*$ is iid to $\Tc$, we can simply write
\begin{equation}\label{eq:ET_min}
    \expectation[\Tc] = \frac{\expectation[\Uc\wedge\Rc]}{1 - p_r}.
\end{equation}
This formulation lends itself to some immediate observations. First, the limit as $p_r\to1$ sends $\expectation[\Tc]\to\infty$ as previously mentioned (except in the case where $\Rc=0$). Second, if at least one of $\expectation[\Uc]$ and $\expectation[\Rc]$ is finite, then $\expectation[\Tc]<\infty$. That is, if we assume $\expectation[\Uc]=\infty$, we have $\expectation[\Tc]<\infty$ for any non-preemptive restart with $\expectation[\Rc]<\infty$, and thus trivially beneficial restart. We therefore limit our attention to cases where $\expectation[\Uc]<\infty$ when looking for beneficial restart.

%

\section{Introducing the restart sequence, $\{S_n\}$ }
We observe that restart is beneficial precisely when $\expectation[\Tc] - \expectation[\Uc] < 0$, or equivalently by (\ref{eq:ET_min}) as
\begin{align*}
    0 & > \expectation[\Tc] - \expectation[\Uc] \\
    & = \frac{\expectation[\Uc\wedge\Rc]}{1 - p_r} - \expectation[\Uc] \\
    & = \frac{1}{1-p_r}\left( \expectation[\Uc\wedge\Rc] - (1-p_r)\expectation[\Uc] \right) \\
    & = \frac{\sum_{n\ge0}\left( 1 - U(n) - u(n)\expectation[\Uc] \right)\left( 1 - R(n) \right)}{1-p_r}.
\end{align*}
Since we assume restart is non-preemptive, $1-p_r$ is clearly greater than 0, and we concern ourselves with the numerator.

For a given FP time, $\Uc$, with probability mass function (PMF), $u(n)$, and corresponding cumulative mass function (CMF), $U(n)$, we then define the ``restart sequence of $\Uc$'' by
\begin{equation}
    S_n^\Uc \coloneqq \sum_{k=0}^n \left( 1 - U(k) - u(k)\expectation[\Uc] \right),
\end{equation}
often abbreviated to simply $S_n$ when the FP process is unambiguous. Analysis of this sequence can provide substantial insight into the behavior of the FPUR process. We begin with the observation that for any distribution, $\Uc$, with $\expectation[\Uc]<\infty$, we have that $S_n^\Uc \to 0$ as $n \to \infty$ as seen below. Since $\expectation[\Uc] = \sum_{n\ge0}(1-U(n))$, we can write 
\begin{align*}
       \lim_{n\to\infty} S_n & = \lim_{n\to\infty} \sum_{k=0}^n \left( 1 - U(k) - u(k)\expectation[\Uc] \right) \\
       & = \lim_{n\to\infty} \sum_{k=0}^n \left( 1- U(k) \right) - \lim_{n\to\infty} \expectation[\Uc]U(n) \\
       & = \sum_{k\ge0} \left( 1- U(k) \right) - \expectation[\Uc]\lim_{n\to\infty}U(n) \\
       & = \expectation[\Uc] - \expectation[\Uc] = 0.
   \end{align*}

%

\subsection{Determining whether beneficial restart is possible}
\begin{lemma}
    Let $\{a_n\}$ be some sequence such that $\sum_{n\ge0}a_n<\infty$ and define $A_n \coloneqq \sum_{k=0}^n a_k$. Then the following are equivalent.
    \begin{itemize}
        \item $A_n\ge0$ for all $n\ge0$.
        \item $\sum_{n\ge0} a_nb_n \ge 0$ for any sequence, $\{b_n\}$, decreasing monotonically to 0.
    \end{itemize}
\end{lemma}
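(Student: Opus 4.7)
The plan is to reduce the equivalence to a summation-by-parts (Abel) identity and then read off each direction from the signs of the factors.

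\paragraph*{Forward direction.} Assume $A_n \ge 0$ for all $n$ and let $\{b_n\}$ be monotonically decreasing to $0$ (in particular $b_n \ge 0$ and $\Delta_n \coloneqq b_n - b_{n+1} \ge 0$). I would apply Abel summation on the partial sum
\begin{equation*}
    \sum_{n=0}^{N} a_n b_n \;=\; A_N b_N \;+\; \sum_{n=0}^{N-1} A_n\,(b_n - b_{n+1}).
\end{equation*}
Since $\sum a_n$ converges, $A_N$ tends to a finite limit, and $b_N \to 0$, so the boundary term vanishes as $N \to \infty$. The remaining sum has nonnegative summands by hypothesis, so its limit lies in $[0,\infty]$. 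Hence $\sum_{n\ge 0} a_n b_n \ge 0$.

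\paragraph*{Reverse direction.} Assume $\sum_{n\ge 0} a_n b_n \ge 0$ for every admissible $\{b_n\}$, and fix $N$. The natural choice is $b_n = \mathbb{1}(n \le N)$, which is nonincreasing and eventually $0$ and gives $\sum_{n\ge 0} a_n b_n = A_N$. If ``monotonically decreasing'' is read strictly, I would instead use, for any $\varepsilon > 0$, the sequence
\begin{equation*}
    b_n^{(\varepsilon)} \;=\; \mathbb{1}(n \le N) \;+\; \varepsilon\, 2^{-n},
\end{equation*}
which is strictly decreasing to $0$. Since $\sum a_n$ converges absolutely in the relevant regime (or, more precisely, since the correction sum $\varepsilon \sum a_n 2^{-n}$ is controlled by $\varepsilon \cdot \sum |a_n|$ whenever this is finite, and can be replaced by a cruder tail bound otherwise), letting $\varepsilon \to 0$ yields $A_N \ge 0$.

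\paragraph*{Expected obstacle.} The forward direction is essentially immediate from Abel summation. The only delicate point I anticipate is the reverse direction under a strict-monotonicity reading of the hypothesis: one must exhibit strictly decreasing approximants of the indicator of $\{0,1,\dots,N\}$ whose error against $\{a_n\}$ can be made arbitrarily small. A safer route, if absolute summability of $\{a_n\}$ is not assumed, is to approximate $\mathbb{1}(n \le N)$ by piecewise-linear tails, e.g.\ $b_n^{(M)} = 1$ for $n \le N$, $b_n^{(M)} = 1 - (n-N)/M$ for $N < n \le N+M$, and $b_n^{(M)} = 0$ thereafter, and then send $M \to \infty$; using Abel summation as in the forward direction, the extra contribution is $\frac{1}{M}\sum_{n=N}^{N+M-1} A_n$, which tends to $0$ because $A_n$ converges. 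This avoids any integrability subtlety and closes the argument.
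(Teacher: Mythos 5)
The paper states this lemma without proof, so there is no authorial argument to compare against; your summation-by-parts approach is evidently the intended one (it is exactly the structure hidden in the proof of Theorem~\ref{th:Sn_restart}, where $b_n=1-R(n)$). Your forward direction is correct, and your primary reverse-direction argument is also fine: the indicator $b_n=\mathbb{1}(n\le N)$ is admissible under the weak reading of ``decreasing'' (it is precisely $1-\tilde R(n)$ for sharp restart at $N+1$, the case the paper's Corollary relies on), and if one insists on strict decrease, the perturbation $b^{(\varepsilon)}_n=\mathbb{1}(n\le N)+\varepsilon2^{-n}$ works with no integrability worry, since convergence of $\sum a_n$ forces $a_n\to0$, hence $\sup_n|a_n|<\infty$ and $\sum_n|a_n|2^{-n}<\infty$; letting $\varepsilon\to0$ gives $A_N\ge0$.

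The one genuine error is the ``safer route'' in your final paragraph, which should be deleted. With $b^{(M)}_n$ as you define it, Abel summation gives
\begin{equation*}
    \sum_{n\ge0} a_n b^{(M)}_n \;=\; \frac{1}{M}\sum_{n=N}^{N+M-1} A_n ,
\end{equation*}
and this Ces\`aro average does \emph{not} tend to $A_N$ plus a vanishing correction: since $b^{(M)}_n\to1$ pointwise as $M\to\infty$, it converges to $\lim_n A_n=\sum_{n\ge0}a_n$. The ``extra contribution'' $\frac{1}{M}\sum_{n=N}^{N+M-1}(A_n-A_N)$ tends to $\sum_{k>N}a_k$, not to $0$, so sending $M\to\infty$ only recovers the trivial inequality $\sum_{n\ge0}a_n\ge0$ and loses all information about $A_N$. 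The fix is simply to discard this variant and keep the $\varepsilon2^{-n}$ perturbation (with $N$ fixed and $\varepsilon\to0$), which already covers every reading of the hypothesis.
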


\begin{theorem} \label{th:Sn_restart}
   $\Uc$ permits beneficial restart iff there exists some $n\ge0$ such that $S_n<0$.
\end{theorem}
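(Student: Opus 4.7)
The plan is to recast the inequality $\expectation[\Tc] - \expectation[\Uc] < 0$ already derived just before the theorem as an inner product
\[
\sum_{n \ge 0} a_n b_n < 0, \qquad a_n \coloneqq 1 - U(n) - u(n)\expectation[\Uc], \qquad b_n \coloneqq 1 - R(n),
\]
so that the partial sums of $\{a_n\}$ are exactly $\{S_n\}$, and then to apply the preceding Lemma. Before doing so I would verify its hypothesis: using $\sum_{n\ge 0}(1 - U(n)) = \expectation[\Uc]$ and $\sum_{n\ge 0} u(n) = 1$, the series $\sum_n a_n$ telescopes to $0$, which is finite. I would also note that $\{b_n\}$ is the survival function of $\Rc$, hence monotonically non-increasing with limit $0$ whenever $\Rc < \infty$ almost surely, so it is a legitimate test sequence in the Lemma.

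For the $(\Leftarrow)$ direction I would proceed by contrapositive. If $S_n \ge 0$ for every $n$, the Lemma immediately yields $\sum_n a_n b_n \ge 0$ for every admissible $\{b_n\}$; taking $b_n = 1 - R(n)$ for any non-preemptive restart $\Rc$ then gives $\expectation[\Tc] \ge \expectation[\Uc]$, so no restart is beneficial.

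For the $(\Rightarrow)$ direction I would exhibit an explicit beneficial restart whenever $S_n < 0$ for some $n$. The natural candidate is the deterministic restart $\Rc \equiv n+1$: its survival function is $b_k = 1$ for $k \le n$ and $b_k = 0$ for $k > n$, so the pairing collapses to $\sum_{k=0}^n a_k = S_n < 0$, which is (up to the positive factor $1/(1-p_r)$) precisely $\expectation[\Tc] - \expectation[\Uc]$. The main obstacle is checking that this $\Rc$ is non-preemptive, i.e., that $p_r = 1 - U(n) < 1$. I would rule out the degenerate case $U(n) = 0$ by contradiction: it would force $u(k) = 0$ for all $k \le n$, hence $a_k = 1$ and $S_n = n+1 > 0$, contrary to $S_n < 0$. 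A secondary subtlety is whether the Lemma's phrase ``decreasing monotonically to 0'' admits the step-function survival of a deterministic $\Rc$; if a strict reading is required, a small geometric-tailed perturbation of $\Rc \equiv n+1$ provides a strictly decreasing $b_k$ with the same negative sign on the pairing by continuity.
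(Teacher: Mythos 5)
Your proof is correct and follows essentially the same route as the paper: both reduce the sign of $\expectation[\Tc]-\expectation[\Uc]$ to the pairing $\sum_n a_n b_n$ with $b_n = 1-R(n)$ and invoke the preceding Lemma. Your additional checks (that $\sum_n a_n$ telescopes to $0$, that sharp restart at $n+1$ is non-preemptive because $U(n)=0$ would force $S_n=n+1>0$) are worthwhile details the paper leaves implicit, and your explicit sharp-restart construction anticipates the paper's subsequent Corollary; only note that your direction labels $(\Rightarrow)$ and $(\Leftarrow)$ are swapped relative to the statement as written.
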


\begin{proof}
   We note that the quantity $\expectation[\Tc]-\expectation[\Uc]$ has the same sign as $\sum_{n\ge0}\left( 1 - U(n) - u(n)\expectation[\Uc] \right)\left( 1 - R(n) \right)$. Clearly $b_n\coloneqq1-R(n)$ is monotonically decreasing to 0, so by our Lemma, restart is non-beneficial iff $\sum_{k=0}^n\left( 1 - U(k) - u(k)\expectation[\Uc] \right) \ge 0$.
\end{proof}

\begin{corollary}
   If there exists a beneficial restart, then there exists a beneficial sharp restart. Furthermore, sharp restart will minimize the value of $\expectation[\Tc]$ across all restart distributions.
\end{corollary}
\begin{proof}
   Suppose there there exists some restart distribution, $R(n)$, for which $\Uc$ permits a beneficial restart. Then we know there exists some $n^*\in\naturals$ such that $S_{n^*}<0$. Let $\tilde R(n)$ be the CMF of $\tilde \Rc = n^*+1$, that is, $\tilde R(n) = \one_{[n^*+1,\infty)}(n)$. Looking at the proof of Theorem \ref{th:Sn_restart}, we can immediately see that this distribution will also provide a beneficial restart, and thus sharp restart at $N=n^*+1$ is beneficial.
   
   For the second part of the claim, we first make an observation. Let $x=\{x_n\}$ be an element of $X$, the set of nonnegative summable sequences, and consider $F(x) \coloneqq \frac{\sum_n\alpha_nx_n}{\sum_n\beta_nx_n}$ where $\beta_n > 0$, with the intent to minimize $F(x)$ over $X$. It's clear that for any number $\lambda$ such that $\alpha \ge \lambda\beta$, we have 
   \begin{equation*}
       F(x) = \frac{\sum_n\alpha_nx_n}{\sum_n\beta_nx_n} \ge \frac{\sum_n\lambda\beta_nx_n}{\sum_n\beta_nx_n} = \lambda.
   \end{equation*}
   Now, supposing there exists a limit $L \coloneqq \lim_{n\to\infty}\frac{\alpha_n}{\beta_n}$, we have two possibilities:
   \begin{itemize}
       \item if $\frac{\alpha_n}{\beta_n}>L$ for all $n$, then $F(x)>L$ for all $x$ and $\inf_x F(x) = L$, but the minimum is never attained, or
       \item if $\frac{\alpha_n}{\beta_n} \le L$ for some $n$, then there exists $m$ such that $\min_n \frac{\alpha_n}{\beta_n} = \frac{\alpha_m}{\beta_m}$. Furthermore, for $e_m \coloneqq \{\delta_{m,n}\}_{n\ge0}$, we have that $\min_x F(x) = F(e_m) = \frac{\alpha_m}{\beta_m}$, and thus $F$ attains its minimum value.
   \end{itemize}
   We now apply this observation to equation (\ref{eq:ET_min}) where
   \begin{align*}
       \expectation[\Uc\wedge\Rc] & = \sum_{n}(1-U(n))(1-R(n)) \\
       & = \sum_{n\ge0}(1-U(n))\sum_{k>n}r(k) \\
       & = \sum_{k>0}r(k)\sum_{n=0}^{k-1}(1-U(n)) \\
       & \text{and} \\
       1 - p_r & = \sum_n u(n)(1-R(n)) = \sum_{n\ge0}u(n)\sum_{k>n}r(k) \\
       & = \sum_{k>0}r(k)\sum_{n=0}^{k-1}u(n) = \sum_{k>0}r(k)U(k-1).
   \end{align*}
   Letting $\alpha_k \coloneqq \sum_{n=0}^{k-1}(1-U(n))$ and $\beta_k \coloneqq U(k-1)$, we have $\beta_k>0$. We can then rearrange 
   \begin{equation*}
       S_{k-1} = \sum_{n=0}^{k-1}(1-U(n)) - U(k-1)\expectation[\Uc] = \alpha_k - \expectation[\Uc]\beta_k
   \end{equation*}
   to get 
   \begin{equation*}
       \frac{\alpha_k}{\beta_k} = \expectation[\Uc] + \frac{S_{k-1}}{\beta_k} = \expectation[\Uc] + \frac{S_{k-1}}{U(k-1)}.
   \end{equation*}
   Since $S_{k-1}\to0$ and $U(k-1)\to 1$, we have $\lim_{k\to\infty}\frac{\alpha_k}{\beta_k} = \expectation[\Uc]$. For a sequence $r=\{r(n)\}_{n\ge0}$, we have that 
   \begin{equation*}
       F(r) = \frac{\sum_n \alpha_n r(n)}{\sum_n \beta_n r(n)} = \expectation[\Tc],
   \end{equation*}
   which gives us two possibilities:
   \begin{itemize}
       \item if $S_k > 0$ for all $k$, then $\frac{\alpha_k}{\beta_k} > \expectation[\Uc]$ and thus $\expectation[\Tc]=F(r)>\expectation[\Uc]$ for all summable sequences $r$, and no restart is beneficial, or
       \item if $S_k \le 0$ for some $k$, then there exists $m \ge 1$ such that 
       \begin{equation*}
           \min_k \frac{\alpha_k}{\beta_k} = \frac{\alpha_m}{\beta_m} = \expectation[\Uc] + \frac{S_{m-1}}{U(m-1)} \le \expectation[\Uc].
       \end{equation*}
       In this case, sharp restart at $m$, that is, $r = e_m$ will minimize $\expectation[\Tc] = F(e_m) \le \expectation[\Uc]$. In particular, if $S_k$ is ever negative, then $\expectation[\Tc] = F(e_m)<\expectation[\Uc]$, and there exists a most beneficial restart, which is sharp.
   \end{itemize}
\end{proof}

To illustrate an immediate application of this theorem, we compute the sequence $\{S_n\}$ for some simple distributions below to check for beneficial restart.

%

\subsubsection{Example: $\Uc = N$}
For the deterministic FP time, we have $u(n) = \delta_{n,N}$, $U(n) = \one_{[N,\infty)}(n)$ and $\expectation[\Uc] = N$. Thus we can directly compute the sequence, $\{S_n\}$.
\begin{align*}
    S_n & = \sum_{k=0}^n \left( 1 - U(k) - u(k)\expectation[\Uc] \right) \\
    & = \sum_{k=0}^n \left( 1 - \one_{[N,\infty)}(k) - \delta_{k,N}N \right) \\
    & = \sum_{k=0}^n \left( \one_{[0,N)}(k) - \delta_{k,N}N \right) \\
    & = 
    \begin{cases}
       n+1 & \text{if } 0 \le n < N\\
       0 & \text{otherwise}
    \end{cases}
\end{align*}
Note that $S_n \ge 0$ for all $n$, and thus a deterministic FP time cannot be reduced by adding restart.

%

\subsubsection{Example: $\Uc \sim \text{Geom}(\rho)$}
For the geometrically distributed FP time, we have that $u(n) = \rho(1-\rho)^n$, $U(n) = 1 - (1-\rho)^{n+1}$ and $\expectation[\Uc] = \frac{1-\rho}{\rho}$. Thus we can directly compute the sequence, $\{S_n\}$.
\begin{align*}
    S_n & = \sum_{k=0}^n \left( 1 - U(k) - u(k)\expectation[\Uc] \right) \\
    & = \sum_{k=0}^n \left( 1 - (1 - (1-\rho)^{k+1}) - \rho(1-\rho)^k\cdot\frac{1-\rho}{\rho} \right) \\
    & = \sum_{k=0}^n \left( (1-\rho)^{k+1} - (1-\rho)^{k+1} \right) \\
    & = 0
\end{align*}
Note that since $S_n \equiv 0$, it is trivially greater than or equal to $0$ and $\Uc$ does not permit beneficial restart. In fact, the geometric distribution is, in some sense, the threshold of beneficial restart, as we shall see throughout the paper. Furthermore, for any non-preemptive restart, $\expectation[\Tc] = \expectation[\Uc]$; that is, introducing restart does not change the mean hitting time of a geometrically distributed FP process.

%

\subsubsection{Example: $\Uc$ as the mixture of two point masses}\label{ex:two_point_mixture}
For weights $w_1,w_2\in(0,1)$ and $M_1,M_2\in\naturals$ with $w_1+w_2=1$ and $M_2>M_1$, we define the PMF $u(n) = w_1\delta_{n,M_1} + w_2\delta_{n,M_2}$. We can easily compute the cumulative mass function and $\expectation[\Uc]$, so we can write
\begin{equation*}
    S_n = 
    \begin{cases}
       n+1 & n<M_1 \\
       w_2(n+1) - w_1w_2(M_2-M_1) & M_1 \le n < M_2 \\
       0 & M_2 \le n
    \end{cases}.
\end{equation*}
It's clear that $S_n$ will be nonnegative outside of $[M_1,M_2)$, but $S_{M_1}<0$ exactly when $M_2>\frac{(w_1+1)M_1+1}{w_1}$. That is, when the two point masses are sufficiently far apart, sharp restart at $M_1+1$ will be beneficial.

%

\section{Properties and techniques}
Since the behavior of the restart sequence completely determines which distributions permit beneficial restart, it's helpful to understand some of its properties and to develop some tools to help examine it more closely. To that end, we introduce some notation and terminology below.

%

\subsection{First Step Analysis (FSA)}
We begin simply, by defining $a \coloneqq \min\{n\mid u(n)>0\}$ and $b \coloneqq \max\{n\mid u(n)>0\}$, if such a maximum exists. Note that $a\ge0$, and $b \ge a$ exists only when the distribution of $\Uc$ is finitely supported. Furthermore, $a>0$ has an interesting and immediate result: since $u(n)$ and $U(n)$ are 0 for all $n<a$, we can see that $S_n = n+1$ for all $n<a$. Thus, the first possibility for $S_n<0$ (and therefore beneficial restart) is at $n=a$. Thus, we can check $S_a<0$, which if true indicates that a beneficial restart exists. The converse, however, is not true. In the other case, when $S_a \ge 0$, $S_n<0$ could be true for some larger $n$ and thus beneficial restart might still be possible. Ultimately $S_a < 0$ is sufficient, but not necessary, to show the existence of a beneficial restart and, crucially, usually easy to check.
\begin{proposition}\label{prop:FSA}
   If $\frac{a+1-u(a)}{u(a)} < \expectation[\Uc]$, then $\Uc$ admits beneficial restart.
\end{proposition}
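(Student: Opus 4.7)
The plan is to reduce the inequality $\frac{a+1-u(a)}{u(a)} < \expectation[\Uc]$ directly to the condition $S_a < 0$ and then invoke Theorem \ref{th:Sn_restart}. The preamble to the proposition already does most of the conceptual work: since $u(k)=0$ and $U(k)=0$ for all $k<a$, we have $S_{a-1} = a$ and $a$ is the first index at which $S_n$ has any chance of being negative. So the natural target is a clean closed form for $S_a$.

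First I would write $S_a = S_{a-1} + (1 - U(a) - u(a)\expectation[\Uc])$. Using $U(a) = u(a)$ (because $u(k)=0$ for $k<a$), this becomes $S_a = a + (1 - u(a) - u(a)\expectation[\Uc]) = a + 1 - u(a) - u(a)\expectation[\Uc]$. Next I would rearrange: $S_a < 0$ iff $u(a)\expectation[\Uc] > a+1-u(a)$, and since $u(a)>0$ by definition of $a$, this is equivalent to $\expectation[\Uc] > \frac{a+1-u(a)}{u(a)}$, which is precisely the hypothesis. Finally, Theorem \ref{th:Sn_restart} converts the existence of some $n$ with $S_n<0$ into the existence of beneficial restart, completing the argument.

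There is no real obstacle here; the only mild point of care is the identity $U(a) = u(a)$, which relies on the minimality of $a$, and noting that $u(a)>0$ ensures the division in the restated inequality is legitimate. The proof is essentially a one-line algebraic manipulation combined with the already-established theorem.
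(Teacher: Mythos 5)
Your proof is correct and follows essentially the same route as the paper: compute $S_a = a+1-u(a)-u(a)\expectation[\Uc]$ from $S_{a-1}=a$ and $U(a)=u(a)$, observe that the hypothesis is exactly $S_a<0$, and invoke Theorem \ref{th:Sn_restart}. The only cosmetic difference is that the paper handles $a=0$ as a separately noted case, which your convention $S_{-1}=0$ absorbs automatically.
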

We call checking the inequality from this proposition the ``First Step Analysis," or FSA.
\begin{proof}
   We know that $S_{a-1} = a$ for $a>0$, hence 
   \begin{align*}
       S_a & = S_{a-1} + 1 - U(a) - u(a)\expectation[\Uc] \\
       & = a + 1 - u(a) - u(a)\expectation[\Uc]).
   \end{align*}
   This quantity is negative precisely when
   \begin{equation*}
       u(a)\expectation[\Uc] > a + 1 - u(a), \text{ or}
    \end{equation*}
    \begin{equation} \label{eq:FSA}
        \expectation[\Uc] > \frac{a+1-u(a)}{u(a)}.
    \end{equation}
   Clearly, the same holds for $a=0$.
\end{proof}

%

\subsection{Finitely supported distributions}
When $b < \infty$, we can say even more. Since the support of $\Uc$ is contained in $\{a, \ldots, b\}$, we know that $U(n) = 1$ for $n \ge b$ and $u(n) = 0$ for $n > b$. Thus, for any $n > b$, we can see that 
\begin{align*}
    S_n & = S_b + \sum_{k=b+1}^n (1 - U(k) - u(k)\expectation[\Uc]) \\
    & = S_b + \sum_{k=b+1}^n (1 - 1 - 0) = S_b,
\end{align*}
and thus the restart sequence becomes constant after $b$. Notice, however, that $S_b = \sum_{k=0}^b (1 - U(k)) - U(b)\expectation[\Uc]$ which is exactly 0. Therefore, we can see that $S_n=0$ for all $n \ge b$. In fact, the sequence will always descend to 0 at $b$, as we show next.
\begin{proposition}\label{prop-finite_S_descend}
   For a finitely supported distribution, $\Uc$, the last non-zero step of the sequence $S_n^\Uc$ will be down. That is, $S_{b-1}^\Uc>0$.
\end{proposition}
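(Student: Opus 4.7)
The plan is to read off the claim from a single-term recursion computation. By definition, $S_b^{\Uc} = S_{b-1}^{\Uc} + \bigl(1 - U(b) - u(b)\expectation[\Uc]\bigr)$, and the preceding paragraph has already established that $S_b^{\Uc} = 0$. So the whole question reduces to evaluating the last increment $1 - U(b) - u(b)\expectation[\Uc]$ and showing it is strictly negative.

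First I would use the defining properties of $b$: since $b = \max\{n \mid u(n) > 0\}$, we have $U(b) = 1$ and $u(b) > 0$. Substituting, the increment collapses to $-u(b)\expectation[\Uc]$, so rearranging the recursion gives $S_{b-1}^{\Uc} = u(b)\expectation[\Uc]$. Since $u(b) > 0$ by definition, positivity of $S_{b-1}^{\Uc}$ follows as soon as we know $\expectation[\Uc] > 0$.

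The only mild obstacle is ruling out the degenerate situation $\expectation[\Uc] = 0$. This can only occur when all mass sits at $0$, which forces $a = b = 0$; in that case the statement is vacuous since there is no index $b - 1 \ge 0$ and the sequence is identically zero (no non-zero steps exist at all). Otherwise $b \ge 1$, which means there is positive probability on a strictly positive integer, and hence $\expectation[\Uc] > 0$. I would dispense with the degenerate case in a single line and then conclude $S_{b-1}^{\Uc} = u(b)\expectation[\Uc] > 0$, which is exactly the desired strict inequality and shows that the final non-zero increment of $\{S_n^{\Uc}\}$ must be a decrease to $0$.
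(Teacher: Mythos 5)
Your proof is correct and follows essentially the same route as the paper: compute the final increment $S_b - S_{b-1} = -u(b)\expectation[\Uc]$, use $S_b = 0$, and conclude $S_{b-1} = u(b)\expectation[\Uc] > 0$. Your explicit handling of the degenerate case $\expectation[\Uc] = 0$ (all mass at zero, so $b=0$ and the claim is vacuous) is a small point of extra care that the paper leaves implicit, but it does not change the argument.
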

\begin{proof}
   Let's examine the difference $S_b - S_{b-1}$. We can clearly see that 
   \begin{align*}
       S_b - S_{b-1} & = S_{b-1} + (1 - U(b) - u(b)\expectation[\Uc]) - S_{b-1}\\
       & = 1 - 1 - u(b)\expectation[\Uc] \\
       & = - u(b)\expectation[\Uc].
   \end{align*}
   Since $u(b)>0$ by the definition of $b$ and $S_b = 0$, we can immediately see that $S_{b-1} = u(b)\expectation[\Uc]>0$.
\end{proof}

%

\subsection{Forcing a gap in the support of $\Uc$}
It's natural to ask how various perturbations to the distribution of $\Uc$ will affect $S_n^\Uc$, and the first we consider is the introduction of a gap into the support, in which a distribution is cut at a certain point and all subsequent probabilistic mass is shifted back. That is, let $g,m$ be chosen from $\naturals$. Then define $\tilde\Uc$ with the following distribution:
\begin{equation*}
    \tilde u(n) = 
    \begin{cases}
       u(n) & \text{ if } n < m \\
       0 & \text{ if } m \le n < m+g \\
       u(n-g) & \text{ if } m+g \le n
    \end{cases}.
\end{equation*}
Clearly, $\tilde u(n)$ is still a probability distribution, and we can compute its mean to be $\expectation[\tilde \Uc] = \expectation[\Uc] + g(1-U(m-1))$ for $m>0$ (simply $\expectation[\Uc] + g$ for $m=0$). The cumulative mass function for $\tilde \Uc$ can similarly be written in terms of $\Uc$.
\begin{equation*}
    \tilde U(n) =
    \begin{cases}
       U(n) & \text{ if } n < m \\
       U(m-1) & \text{ if } m \le n < m+g \\
       U(n-g) & \text{ if } m+g \le n
    \end{cases}
\end{equation*}

Now, we may compute $S_n^{\tilde \Uc}$. We consider three regions, just as with the distribution above. 
\begin{equation*}
    S_n^{\tilde\Uc} = 
    \begin{cases}
       S_n^\Uc - U(n)(1-U(m-1))g \\
       \quad\quad\quad \text{ if } n < m \\
       S_{m-1}^\Uc + (1-U(m-1))(1 - U(m-1)g + n-m) \\
       \quad\quad\quad \text{ if } m \le n < m+g \\
       S_{n-g}^\Uc + (1-U(m-1))(1-U(n-g))g \\
       \quad\quad\quad \text{ if } m+g \le n
    \end{cases}
\end{equation*}
\begin{itemize}
    \item \textbf{Pre-Gap:} If $n<m$, it is clear that $S_n^{\tilde\Uc}\le S_n^\Uc$ with equality only if the support hasn't started by, or finishes before, the gap. Thus, if $\Uc$ has a beneficial restart before the gap, then so will $\tilde\Uc$. If $\Uc$ does not permit beneficial restart, then $\tilde\Uc$ will for $g>\frac{S_n^\Uc}{U(n)(1-U(m-1))}$. This is true provided $U(m-1)<1$, which is always the case if $\Uc$ has unbounded support (like the geometric). This makes sense considering that $\lim_{g\to\infty}\expectation[\tilde\Uc]=\infty$, and we know that non-preemptive restart is beneficial in that case.
    \item \textbf{Within-Gap:} For $m \le n < m+g$, we can see that $S_n^{\tilde\Uc}$ is monotonically increasing. On this interval, therefore, we have that 
    \begin{equation*}
         1 - gU(m-1) \le \frac{S_n^{\tilde\Uc} - S_{m-1}^\Uc}{1-U(m-1)} \le (1-U(m-1))g.
    \end{equation*}
    So long as $g\le\frac{1}{U(m-1)}$ then $S_n^{\tilde\Uc}\ge S_{m-1}^\Uc$, and a lack of beneficial restart at $m$ means that $\tilde\Uc$ won't permit beneficial restart within the gap. If, however, $g\ge\frac{1}{U(m-1)}$ or $S_{m-1}^\Uc<0$, then it's not immediately clear.
    \item \textbf{Post-Gap:} In the last case, where $n \ge m+g$, we see that $S_n^{\tilde\Uc} \ge S_{n-g}^\Uc$. Thus $S_n^\Uc\ge0$ implies that restart won't be beneficial after the support of $\tilde\Uc$ resumes. On the other hand, if restart is beneficial for $\Uc$ at $N+1$, then it will be for $\tilde\Uc$ at $N+1+g$ iff $g<\frac{|S_N^\Uc|}{(1-U(m-1))(1-U(N))}$.
\end{itemize}
An interesting subcase is the one in which the entire distribution is pushed back, which we consider now.

%

\subsubsection{Delaying a FP time}
If $m=0$, then the previous trichotomy reduces to 
\begin{equation*}
    S_n^{\tilde\Uc} = 
    \begin{cases}
       n + 1  & \text{ if } 0 \le n < g \\
       S_{n-g}^\Uc + (1 - U(n-g))g & \text{ if } g \le n
    \end{cases}.
\end{equation*}
Thus, we can say that $S_n^{\tilde\Uc}\ge0$ for $n < g$ and $S_n^{\tilde\Uc}\ge S_{n-g}^\Uc$ for $n \ge g$. Thus, if $\Uc$ does not permit beneficial restart, neither will $\tilde\Uc$. This sheds more light on our previous example with the shifted geometric distribution; delaying by 1 only increased the values of $S_n$.

If, on the other hand, sharp restart at $N+1$ is beneficial for $\Uc$, we can see that restart at $N+1+g$ will be beneficial for $\tilde\Uc$ iff $g < \frac{|S_N^\Uc|}{1-U(N)}$. Equivalently, we can say if the gap is too large, then $\tilde\Uc$ will not permit beneficial restart, even if $\Uc$ does.

%

\subsection{Uniqueness of the distribution}\label{sec:uniqueness_of_Sn}
It is clear that each distribution, $\Uc$, will generate a unique restart sequence, so we are immediately prompted to ask whether each restart sequence is associated with a single distribution. Suppose that $\Uc$ and $\Vc$ are distinct distributions, but that $\{S_n^\Uc\}$ and $\{S_n^\Vc\}$ are the same sequence.
\begin{align*}
    0 & = S_n^\Uc - S_n^\Vc \\
    & = \sum_{k=0}^n(1 - U(k) - u(k)\expectation[\Uc]) - \sum_{k=0}^n(1 - V(k) - v(k)\expectation[\Vc]) \\
    & = \sum_{k=0}^n (V(k)-U(k) + v(k)\expectation[\Vc] - u(k)\expectation[\Uc])
\end{align*}
Since this must be true for all $n\ge0$, we have 
\begin{equation*}
    0 = V(k) + v(k)\expectation[\Vc] - U(k) - u(k)\expectation[\Uc]
\end{equation*}
for all $k\ge0$. When $k=0$, this equates to $0 = v(0)(1+\expectation[\Vc]) - u(0)(1+\expectation[\Uc])$. If we suppose further that $\Uc$ and $\Vc$ have the same mean, then it's clear that $\Uc$ and $\Vc$ must have the same distribution. Without this last supposition, however, we can expect no such result.

In general, define $p \coloneqq \frac{1}{\expectation[\Uc]+1}$ and $q \coloneqq 1-p$. We know that $U(0) = \frac{1-S_0}{\expectation[\Uc]+1}$, and since $S_n = S_{n-1} + 1 - U(n) + \expectation[\Uc](U(n)-U(n-1))$ for $n\ge1$, we can write 
\begin{equation*}
    U(n) = p(1+S_{n-1}-S_n) + qU(n-1).
\end{equation*}
Defining $F_n = 1+S_{n-1}-S_n$ and $S_{-1} = 0$, we have
\begin{equation}\label{eq:conv}
    U(n) = p\sum_{k=0}^nF_{n-k}q^k.
\end{equation}
That is, since $F_n$ is the same for $\Uc$ and $\Vc$, and $p,q$ depend only on the mean of $\Uc$ or $\Vc$, we offer the following proposition.
\begin{proposition}
   If a given a restart sequence, $\{S_n\}$, can be associated with a distribution, $\Uc$, such distribution is determined uniquely up to $\expectation[\Uc]$.
\end{proposition}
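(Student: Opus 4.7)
The plan is to turn the explicit computation already performed in the paragraph preceding the proposition into a uniqueness statement. The key observation is that, given the pair $(\{S_n\},\expectation[\Uc])$, the one-step recurrence $U(n) = p(1+S_{n-1}-S_n) + qU(n-1)$ together with the initial value $U(0) = (1-S_0)/(\expectation[\Uc]+1)$, both derived directly above, already determine $U(n)$ for every $n\ge0$. So the proof is essentially a bookkeeping step.

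Concretely, I would first record that the quantities $p = 1/(\expectation[\Uc]+1)$, $q = 1-p$, and $F_n = 1+S_{n-1}-S_n$ (with the convention $S_{-1}=0$) depend only on the restart sequence and on the mean. Iterating the recurrence then produces the closed form $U(n) = p\sum_{k=0}^n F_{n-k}q^k$ appearing in (\ref{eq:conv}); thus $U(n)$ can be expressed as a function of the data $(\{S_n\},\expectation[\Uc])$ alone.

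For the uniqueness conclusion, I would suppose that $\Uc$ and $\Vc$ are two distributions sharing both the same restart sequence and the same mean. Then they generate the same $p$, $q$, and $\{F_n\}$, and applying (\ref{eq:conv}) to each forces $U(n)=V(n)$ for every $n\ge0$, so $\Uc$ and $\Vc$ have the same distribution. Combined with the observation that distinct means with the same $\{S_n\}$ can occur (as the $k=0$ identity $0 = v(0)(1+\expectation[\Vc]) - u(0)(1+\expectation[\Uc])$ already indicates), this is the strongest uniqueness statement one should hope for.

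The only point at which I would pause is verifying that the implicit map $(\{S_n\},\expectation[\Uc])\mapsto U$ is well-posed — that the output is nondecreasing, bounded above by $1$, and tends to $1$. But this is built into the hypothesis that $\{S_n\}$ can be associated with a distribution of the prescribed mean, so the required constraints are automatically met and no independent verification is needed. Hence the main obstacle is essentially nonexistent, and the proposition reduces to invoking (\ref{eq:conv}) twice.
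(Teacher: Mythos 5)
Your proposal is correct and follows essentially the same route as the paper: the paper's justification is exactly the derivation of the recurrence $U(n) = p(1+S_{n-1}-S_n) + qU(n-1)$ with $U(0)=(1-S_0)/(\expectation[\Uc]+1)$ and its iterated form (\ref{eq:conv}), from which uniqueness given $\{S_n\}$ and $\expectation[\Uc]$ is immediate. Your added remark that well-posedness is subsumed by the hypothesis that $\{S_n\}$ is associated with some distribution is also consistent with how the paper states the proposition.
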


\subsection{Convexity analysis on the restart sequence}
We begin by defining convexity of the restart sequence in a standard way: let $C_n \coloneqq \frac{S_{n-1} - 2S_n + S_{n+1}}{2}$ and say that the sequence is convex at $n$ if $C_n>0$, concave at $n$ if $C_n<0$ or linear at $n$ if $C_n=0$. In fact, there is a quick way to check this inequality.
\begin{proposition}
   Define $K \coloneqq \frac{\expectation[\Uc]}{\expectation[\Uc] + 1}$.
   \begin{itemize}
       \item $\{S_n\}$ is convex at $n$ if $u(n+1) < Ku(n)$,
       \item $\{S_n\}$ is linear at $n$ if $u(n+1) = Ku(n)$, and
       \item $\{S_n\}$ is concave at $n$ if $u(n+1) > Ku(n)$.
   \end{itemize}
\end{proposition}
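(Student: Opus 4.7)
The plan is to prove this by a direct telescoping calculation on the second difference of $S_n$. From the definition, the first differences satisfy $S_n - S_{n-1} = 1 - U(n) - u(n)\expectation[\Uc]$, so the second difference $S_{n-1} - 2S_n + S_{n+1}$ equals $(S_{n+1} - S_n) - (S_n - S_{n-1})$, which after cancellation simplifies to $U(n) - U(n+1) + (u(n) - u(n+1))\expectation[\Uc]$.

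Next, I would use the identity $U(n+1) - U(n) = u(n+1)$ to collapse this expression to $u(n)\expectation[\Uc] - u(n+1)(1 + \expectation[\Uc])$. Thus $2C_n = u(n)\expectation[\Uc] - u(n+1)(1+\expectation[\Uc])$.

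Finally, the three cases follow immediately by dividing through by $1 + \expectation[\Uc]$ (which is positive since $\expectation[\Uc] \ge 0$) and rearranging: $C_n > 0$ iff $u(n+1) < \frac{\expectation[\Uc]}{\expectation[\Uc]+1} u(n) = Ku(n)$, with equality cases giving linearity and the reverse inequality giving concavity.

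There is essentially no obstacle here; the argument is a mechanical simplification. The only small care required is to correctly handle the $U(n+1) - U(n) = u(n+1)$ substitution and keep track of signs so that the constant $K$ emerges in the stated form. Because the characterization depends only on the local relationship between $u(n)$ and $u(n+1)$ (and not on the full distribution or on $U$), the condition is genuinely pointwise and easy to check in practice.
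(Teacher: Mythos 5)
Your proposal is correct and follows essentially the same route as the paper: compute the second difference of $S_n$, use $U(n+1)-U(n)=u(n+1)$ to reduce it to $u(n)\expectation[\Uc]-u(n+1)(1+\expectation[\Uc])$, and divide by the positive factor $\expectation[\Uc]+1$ to read off the three cases. No gaps.
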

\begin{proof}
   \begin{align*}
       C_n & = \frac{S_{n-1} - 2S_n + S_{n+1}}{2} \\
       & = \frac{1}{2}\left( S_{n-1} - 2(S_{n-1} + 1 - U(n) - u(n)\expectation[\Uc]) \right.\\
       & \quad\quad + (S_{n-1} + 1 - U(n) - u(n)\expectation[\Uc] + 1 \\
       & \quad\quad \left.- U(n+1) - u(n+1)\expectation[\Uc]) \right) \\
       & = \frac{1}{2}\left( U(n) - U(n+1) + (u(n)-u(n+1))\expectation[\Uc] \right) \\
       & = \frac{1}{2}\left( u(n)\expectation[\Uc] - u(n+1)(1 + \expectation[\Uc]) \right) \\
       & = \frac{\expectation[\Uc]+1}{2}\left( u(n)K - u(n+1) \right)
   \end{align*}
\end{proof}

As a note, it can be useful to consider the ratio $r_n \coloneqq \frac{u(n+1)}{u(n)}$ when appropriate. In this case, we need only compare $r_n$ and $K$ to determine convexity at $n$.

\subsubsection{Bounded distributions}
For distributions with finite support that produce restart sequences with constant convexity, a lot can be said. Since $\{S_n\}$ decreases to 0 at $b$ by Proposition \ref{prop-finite_S_descend}, we can immediately make the following proposition.
\begin{proposition}
   Let $\Uc$ have a distribution with finite support. Then
   \begin{itemize}
       \item $\{S_n\}$ convex or linear on its support means $S_n\ge0$ for all $n$, and
       \item $\{S_n\}$ concave on its support means that $S_a<0$ iff $S_n<0$, and we need only perform a FSA by checking inequality (\ref{eq:FSA}).
   \end{itemize}
\end{proposition}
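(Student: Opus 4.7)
The key algebraic observation is that $2 C_n = d_n - d_{n-1}$, where I set $d_n \coloneqq S_{n+1}-S_n$. Constant convexity on the support therefore translates directly into a monotonicity condition on the first differences. I would pair this with three boundary facts that have already been established: $S_n = n+1$ for $n < a$, $S_n = 0$ for all $n \ge b$, and $S_{b-1} > 0$ by Proposition \ref{prop-finite_S_descend}. The last of these immediately yields $d_{b-1} = -S_{b-1} < 0$, which is the lever I would use against the monotonicity.

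For the convex-or-linear case, $d_n$ is nondecreasing across the indices $a-1 \le n \le b-1$, so every such $d_n$ is at most $d_{b-1} < 0$. This forces $S_n$ to strictly decrease from $S_{a-1} = a$ down to $S_b = 0$, pinning $S_n \in (0, a]$ for $n \in [a, b-1]$. Combined with $S_n = n+1 \ge 0$ for $n < a$ and $S_n = 0$ for $n > b$, we obtain $S_n \ge 0$ everywhere.

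For the concave case, the same identity makes $d_n$ strictly decreasing on the support, which is exactly the statement that $\{S_n\}$ is a strictly concave sequence on $[a, b]$. The standard chord inequality for concave sequences then places $S_n$ on or above the line segment joining $(a, S_a)$ and $(b, S_b) = (b, 0)$, namely $L(n) = \frac{(b-n)}{b-a}\, S_a$. Since $L(n) \ge 0$ on $[a, b]$ exactly when $S_a \ge 0$, concavity forces $S_n \ge 0$ on $[a, b]$ whenever $S_a \ge 0$; together with the trivial cases outside the support, this yields the equivalence ``$S_n < 0$ for some $n$ iff $S_a < 0$.'' Proposition \ref{prop:FSA} then rephrases $S_a < 0$ as the FSA inequality (\ref{eq:FSA}), completing the reduction.

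The main delicate point is bookkeeping at the boundary: I need the monotonicity of $\{d_n\}$ to extend down to $d_{a-1}$ in the convex argument, and the chord inequality to cover the closed interval $[a,b]$ in the concave argument. Both are consequences of imposing the convexity/concavity hypothesis at each $n \in \{a, \ldots, b-1\}$, which is the natural reading of ``on its support''; no heavy computation is required once that reading is fixed.
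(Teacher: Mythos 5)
Your argument is correct and is exactly the paper's intended reasoning made explicit: the paper's one-line justification rests on the descent $S_{b-1}>0=S_b$ from Proposition \ref{prop-finite_S_descend}, which is precisely your lever $d_{b-1}<0$, combined with the monotonicity of the first differences (convex case) and the chord/endpoint-minimum property of concave sequences (concave case). The boundary bookkeeping you flag is handled correctly, since positivity only needs to be established on $a\le n\le b-1$ and the chain of differences there is all that is required.
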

We note here that any that any non-decreasing PMF will produce a concave $\{S_n\}$, since $K<1$. For example, we can then immediately confirm that a constant distribution does not permit beneficial restart. Since $u(n)=\frac{1}{b-a+1}$ for all $a \le n \le b$, we have $\frac{u(n+1)}{u(n)} = 1 > K$ and $\{S_n\}$ is concave. We then check inequality (\ref{eq:FSA}) and see 
\begin{align*}
    \frac{a+1-u(a)}{u(a)} & = \frac{a + 1 - \frac{1}{b-a+1}}{\frac{1}{b-a+1}} \\
    & = (b-a+1)(a+1)-1 \\
    & = ab - a^2 + a + b - a + 1 - 1 \\
    & = a(b-a) + b \ge b \ge \expectation[\Uc].
\end{align*}
Thus $\Uc\sim\text{Unif}(a,b)$ does not permit beneficial restart.

\subsection{Convex or concave tails}
In cases where $\Uc$ has infinite support, there are additional considerations. Rather than considering the entire support, we often just consider the tail of $\{S_n\}$. Since $S_n\to0$, we can make the following observation.
\begin{proposition}
   Suppose there exists some $M\in\naturals$ such that the sign of $d_n \coloneqq u(n+1)-Ku(n)$ is constant for all $n \ge M$. Then the tail of $\{S_n\}$ is either convex ($d_n<0$), concave ($d_n>0$), or linear ($d_n=0$) after $M$. In the case of a convex or linear tail, $S_n\ge0$ for all $n \ge M$ and sharp restart after $M$ won't be beneficial. For a concave tail, sharp restart at \underline{any} time after $M$ will be beneficial.
\end{proposition}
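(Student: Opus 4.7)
My plan is to build on the immediately preceding proposition, which identifies the sign of $C_n$ with the sign of $-d_n$. The opening claim --- that constant sign of $d_n$ on $[M,\infty)$ forces the tail to have constant convexity type --- is then immediate from the definitions of convex, concave, and linear already in use.

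For the sign of $S_n$ itself on the tail, I would translate constant-sign $C_n$ into monotonicity of the first differences $\Delta_n \coloneqq S_{n+1} - S_n$ via the identity $2C_n = \Delta_n - \Delta_{n-1}$, and then combine this with the preceding Proposition that $S_n \to 0$ (so $\Delta_n \to 0$ as well). In the convex-or-linear case, $C_n \ge 0$ for $n \ge M$ means $\{\Delta_n\}$ is non-decreasing with limit $0$, which forces $\Delta_n \le 0$ on the tail; hence $\{S_n\}$ is non-increasing there and tends to $0$, giving $S_n \ge 0$ for all $n \ge M$. Theorem \ref{th:Sn_restart} then rules out beneficial sharp restart at any $N+1$ with $N \ge M$. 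In the concave case, the same argument run with strict inequalities shows $\{\Delta_n\}$ is strictly decreasing to $0$, so $\Delta_n > 0$; then $\{S_n\}$ is strictly increasing to $0$, forcing $S_n < 0$ for every $n \ge M$, and Theorem \ref{th:Sn_restart} together with the corollary's construction delivers a beneficial sharp restart at every $N+1$ with $N \ge M$.

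The main obstacle is really just bookkeeping: keeping strict-versus-weak inequalities aligned so that strict concavity of the tail pushes through to strict negativity of $S_n$, which is what Theorem \ref{th:Sn_restart} requires for \emph{genuine} benefit rather than a borderline equality, and being a bit careful about indexing when $C_n$ on $[M,\infty)$ translates to statements about $\Delta_n$ on $[M-1,\infty)$. No machinery beyond the earlier propositions and Theorem \ref{th:Sn_restart} should be needed.
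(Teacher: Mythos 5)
Your argument is correct and is exactly the reasoning the paper leaves implicit (the paper states this proposition with only the remark "Since $S_n\to0$..." as justification): translate the sign of $C_n$ into monotonicity of the first differences $\Delta_n = S_{n+1}-S_n$, use $S_n\to0$ to force $\Delta_n\to0$, and conclude the sign of $S_n$ on the tail, with Theorem \ref{th:Sn_restart} finishing both directions. Your attention to strict versus weak inequalities in the concave case, and to the index shift from $C_n$ to $\Delta_{n-1}$, is exactly the bookkeeping needed; no gap.
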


\subsubsection{Example of a convex tail: the shifted geometric}
We've already seen the so-called ``shifted geometric,'' or the geometric distribution on $\integers^+$ with rate parameter $\rho$. This clearly satisfies $u(n+1)-Ku(n)<0$ for $M=1$, since $K=\frac{1}{1+\rho}$. As previously determined, this distribution does not allow for beneficial restart.
\subsubsection{Examples of a linear tail}
The only linear subsequence that converges to zero is the one with all zeros, and that's what we see in the tails below. 
\begin{itemize}
    \item The standard geometric distribution on $\naturals$ produces the restart sequence with $S_n=0$ for all $n$.
    \item Any finitely supported distribution has $S_n=0$ for all $n \ge b$. 
    \item As discussed in Section \ref{sec:uniqueness_of_Sn}, the above case need not arise only from a finitely supported distribution. The sequence $\{S_n\}=\{1,0,0,\ldots\}$ can clearly arise as a result of $\Uc = 1$ when $\expectation[\Uc]=1$. But more generally, we can produce a distribution associated with this restart sequence using formula (\ref{eq:conv}) for other values of $\expectation[\Uc]$. The sequence $\{F_n\}$ becomes $\{0,2,1,1,\ldots\}$, which allows for $U(n)=p\sum_{k=0}^nF_{n-k}q^k$ with $p=\frac{1}{\expectation[\Uc]+1}$ and $q = \frac{\expectation[\Uc]}{\expectation[\Uc]+1} = K$, and thus $u(0)=0$, $u(1)=2p$ and $u(n)=(1-2p)pq^{n-2}$ for $n\ge2$ with $\expectation[\Uc]\ge1$. Since $q=K$ from our earlier definition, it's clear that, for $n\ge2$, we have $\frac{u(n+1)}{u(n)} = K$, and thus the tail of $\{S_n\}$ is linear (all zeros).
\end{itemize}
\subsubsection{Example of a concave tail: the zeta distribution on $\naturals$}
For $p\in(1,\infty)$, define $u(n) = (\zeta(p)(n+1)^p)^{-1}$ for $n \ge 0$, which has mean $\expectation[\Uc] = \frac{\zeta(p-1)}{\zeta(p)}-1$. Then, for $p>2$, $K<1$ is fixed and $r_n = \frac{(n+1)^p}{(n+2)^p}$ is monotonically increasing to 1. Thus there exists some $M\in\naturals$ such that $r_n>K$ for all $n>M$, and sharp restart is beneficial after this point (and possibly before).

\subsection{Log-concave distributions}
We begin by recalling that a non-negative sequence, $\{a_n\},$ is called \underline{logarithmically-concave} or just \underline{log-concave} if $a_n^2 \ge a_{n-1}a_{n+1}$ and the sequence has no internal 0s. We now consider FP times that have log-concave PMF. 

On the support of $\Uc$, it is useful to define $r_n = \begin{cases}\frac{u(n+1)}{u(n)} & a \le n \le b \\ 0 & \text{otherwise}\end{cases}$, which is a weakly decreasing sequence in $n$ for $u(n)$ log-concave.

\begin{proposition} \label{prop:log-concave}
   Let $\Uc$ be a FP process with a log-concave PMF. Then $\Uc$ does not permit beneficial restart.
\end{proposition}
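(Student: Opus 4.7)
The plan is to show $S_n \geq 0$ for every $n \geq 0$, so that Theorem~\ref{th:Sn_restart} rules out beneficial restart. I would use the convexity proposition to translate the log-concavity hypothesis into a global shape constraint on $\{S_n\}$, and then read off non-negativity from the fact that $S_n \to 0$.

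Log-concavity forces the ratio $r_n = u(n+1)/u(n)$ to be weakly decreasing on the support, so by the convexity proposition there is a single threshold $M \coloneqq \min\{n \geq 0 : r_n \leq K\}$ (allowing the extremes $M=0$ or $M=\infty$) such that $\{S_n\}$ is concave at every $n < M$ and convex at every $n \geq M$. On the convex tail, the forward differences $\Delta_n \coloneqq S_n - S_{n-1}$ are weakly increasing, and since $S_n \to 0$ forces $\Delta_n \to 0$, a weakly increasing sequence converging to $0$ must satisfy $\Delta_n \leq 0$ for all $n \geq M$. Thus $\{S_n\}$ is non-increasing on $[M,\infty)$ with limit $0$, yielding $S_n \geq 0$ throughout the tail.

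For the concave head I would adopt the convention $S_{-1} \coloneqq 0$ (matching the empty sum) so that the convexity formula remains valid at $n=0$. The piecewise-linear interpolant of $S_{-1}, S_0, \ldots, S_M$ is then a concave function on $[-1,M]$ lying above the chord from $(-1,0)$ to $(M,S_M)$; since both endpoint values are non-negative, so is the chord, forcing $S_n \geq 0$ for $0 \leq n < M$. The extreme case $M=\infty$ (no convex tail) is handled symmetrically: $\Delta_n$ is weakly decreasing with $\Delta_n \to 0$, so $\Delta_n \geq 0$ and $\{S_n\}$ is weakly increasing from $S_{-1}=0$, hence non-negative throughout.

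The main obstacle is the boundary case $M=0$ (purely convex $\{S_n\}$), in which the convex-tail argument alone gives $\Delta_0 \leq 0$ and hence $S_0 \leq 0$; combined with $\{S_n\}$ non-increasing and tending to $0$, this actually forces $S_n \equiv 0$, which is still consistent with the claim and corresponds exactly to the geometric boundary case. A secondary technical point is that if the support of $\Uc$ starts at some $a > 0$, the ratio $r_n$ is only defined for $n \geq a$; however $\{S_n\}$ is linear with slope $1$ on $\{0, 1, \ldots, a-1\}$ (and hence non-negative there), so the concave-then-convex analysis carries over verbatim from $n = a-1$ onward.
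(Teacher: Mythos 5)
Your proof is correct, and it reaches the conclusion by a genuinely different route than the paper's. The paper argues by contradiction: it locates the first global minimizer $m$ of $\{S_n\}$, uses the local inequalities $S_m< S_{m-1}$ (or $S_0<0$ when $m=0$) together with $S_m\le S_{m+1}$ to force $u(m+1)/u(m)<K$, and then invokes log-concavity to conclude that $\{S_n\}$ is strictly convex on all of $[m,\infty)$, which is incompatible with $S_m<0$ and $S_n\to0$. You instead prove $S_n\ge0$ directly: the weak monotonicity of $r_n$ gives a single crossing of the level $K$, hence a (weakly) concave head followed by a convex-or-linear tail; the tail is dispatched by the observation that weakly increasing differences converging to $0$ must be nonpositive, and the head by a chord argument anchored at $S_{-1}=0$. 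Both arguments rest on the same two ingredients (the formula for $C_n$ and the convergence $S_n\to0$), but yours is a direct proof that additionally exhibits the qualitative shape of the restart sequence (nonnegative, rising then falling to $0$), whereas the paper's is shorter because it only ever needs convexity past the minimum. Two small points to tighten: with the convention $r_n=0$ off the support, your literal definition $M=\min\{n\ge0:r_n\le K\}$ returns $M=0$ whenever $a>0$ even though $C_{a-1}<0$; the fix you already sketch --- taking the threshold over $n\ge a$ and absorbing $\{0,\dots,a-1\}$ into the weakly concave head, where $C_n=0$ --- is the version that should be stated. Likewise ``convex at every $n\ge M$'' should read ``convex or linear at every $n\ge M$,'' which is all that your weak-monotonicity argument actually uses.
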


\begin{proof}
   Suppose by way of contradiction that there exists some $n^*$ such that $S_{n^*}<0$ and beneficial restart is possible. Since $\{S_n\}$ converges to $0$, there exists some $s \coloneqq \min_n\{S_n\}$, and we define $m \coloneqq \min\{n \mid S_n = s\}$. We have immediately that $S_m \le S_{m+1}$, and then there are two cases to address.
   \begin{itemize}
       \item $m > 0$ \\ In this case, we also have that $S_m < S_{m-1}$, and thus we can write 
       \begin{align*}
           S_m - S_{m-1} & < 0 \le S_{m+1} - S_m \\
           1 - U(m) - u(m)\expectation[\Uc] & < 1 - U(m) - u(m+1)\left(\expectation[\Uc] + 1\right)
        \end{align*}
        \begin{equation*}
           K = \frac{\expectation[\Uc]}{\expectation[\Uc] + 1} > \frac{u(m+1)}{u(m)} \ge \frac{u(n+1)}{u(n)},
       \end{equation*}
       for all $n \ge m$.
       \item $m = 0$ \\ In this case, we have instead that $S_m < 0$, and thus we can write 
       \begin{align*}
           S_m & < 0 \le S_{m+1} - S_m \\
           1 - U(0) - u(0)\expectation[\Uc] & < 1 - U(0) - u(1)\left(\expectation[\Uc] + 1\right)
        \end{align*}
        \begin{equation*}
           K = \frac{\expectation[\Uc]}{\expectation[\Uc] + 1} > \frac{u(1)}{u(0)} \ge \frac{u(n+1)}{u(n)},
       \end{equation*}
       for all $n \ge m$.
   \end{itemize}
   In either case, the restart sequence in convex after $m$. Whether the support is finite or infinite, this is not possible, and thus $\Uc$ does not permit beneficial restart.
\end{proof}

\subsubsection{Example: Binomial}
For $N\in\integers^+$ and $p\in[0,1]$, we define $u(n) = \binom{N}{n}p^n(1-p)^{N-n}$. We can then compute $r_n = \frac{p(N-n)}{(1-p)(n+1)}$, which is decreasing in $n$ over its support and therefore $\Uc$ has a log-concave PMF and does not permit beneficial restart.

\subsubsection{Example: Poisson}
For $\lambda\in(0,\infty)$, we define $u(n) = \frac{\lambda^n e^{-\lambda}}{n!}$, which gives us $r_n = \frac{\lambda}{n}$. Since $r_n$ is clearly decreasing in $n$, we have that the PMF of $\Uc$ is log-concave, and $\Uc$ does not permit beneficial restart.

\subsection{Log-convex distributions}
Having identified such a definitive result for FP procesess with log-concave PMF, it is reasonable to ask what can be said about those with log-convex PMF, that is, those that satisfy the inequality $u(n)^2 \le u(n-1)u(n+1)$. These distributions have a different nature than those that are log-concave, and aren't as amenable to interpretations with finite support. For those with infinite support, however, we offer the following proposition.

\begin{proposition} \label{prop:log-convex}
   Let $\Uc$ be a FP process with a log-convex PMF. Then either $\Uc$ permits beneficial restart or $\Uc\sim\geom(\rho)$.
\end{proposition}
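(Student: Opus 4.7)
The plan is a case analysis based on where the (log-convex) ratio sequence $r_n \coloneqq u(n+1)/u(n)$ sits relative to $K \coloneqq \expectation[\Uc]/(\expectation[\Uc]+1)$. Log-convexity forces $r_n$ to be weakly increasing on the support of $\Uc$; it also forbids the support from starting past $0$, because at any boundary with $u(n)=0$ and $u(n+1)>0$ the inequality $u(n+1)^2 \le u(n)u(n+2) = 0$ would fail. Hence $u(n) > 0$ for every $n \ge 0$ and $r_n$ is defined everywhere.

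\emph{Case A.} If $r_{n_0} > K$ for some $n_0$, then by monotonicity $r_m > K$ for every $m \ge n_0$, so $d_m \coloneqq u(m+1) - K u(m) > 0$ on $[n_0,\infty)$. This is exactly the concave-tail hypothesis of the preceding proposition, which immediately guarantees that sharp restart at any time past $n_0$ is beneficial, and we are done.

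\emph{Case B.} Suppose $r_n \le K$ for all $n \ge 0$. Iterating $u(n+1) \le K u(n)$ yields $u(n) \le u(0) K^n$. Summing and using $1/(1-K) = \expectation[\Uc] + 1$ gives $u(0) \ge 1 - K$, which is equivalent to $S_0 = 1 - u(0)(\expectation[\Uc]+1) \le 0$. If this is strict, Theorem \ref{th:Sn_restart} produces beneficial restart directly from $S_0 < 0$. Otherwise $u(0) = 1-K$, and now $\sum_n u(0) K^n = u(0)/(1-K) = 1$ agrees with $\sum_n u(n) = 1$; combined with the pointwise bound $u(n) \le u(0) K^n$, this forces term-by-term equality $u(n) = (1-K) K^n$, i.e.\ $\Uc \sim \geom(1-K)$.

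The main subtlety, and what I expect to be the key step, is recognizing that in Case B the single inequality at $n=0$ already does all the work: one never needs to examine the tail of $\{S_n\}$, and the elementary ``equality in the sum forces equality termwise'' observation then pins the distribution to the geometric. Log-convexity itself is active only in Case A, where it is needed to propagate a single breach $r_{n_0}>K$ forward into a full concave tail.
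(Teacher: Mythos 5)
Your proof is correct, but it takes a genuinely different route from the paper's. The paper proves the proposition by mirroring the log-concave argument in reverse: it locates an extremum of $\{S_n\}$, reads off a comparison between $r_m$ and $K$ there, propagates it forward by log-convexity, and then uses $S_n\to0$ to conclude that either $S_n\equiv0$ (with $r_n$ constant, hence geometric) or a contradiction arises, ``paying special attention to which inequalities are strict.'' You instead split directly on whether the weakly increasing ratio sequence $r_n$ ever exceeds $K$: if it does, monotonicity plus the concave-tail proposition finishes immediately; if it never does, you bypass all tail analysis with the pointwise bound $u(n)\le u(0)K^n$, whose sum forces $u(0)\ge 1-K$, i.e.\ $S_0\le0$, and the equality case is pinned to $\geom(1-K)$ by termwise equality in the sum. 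Your Case B is arguably cleaner and more explicit than the paper's sketch, since it identifies the geometric distribution constructively rather than through the strictness bookkeeping; your preliminary observation that log-convexity forces $u(n)>0$ for all $n\ge0$ is also correct and worth making explicit (the paper only informally restricts to infinite support). The one trade-off is that the paper's extremum argument delivers the stronger side conclusion that $S_n<0$ for \emph{every} $n$ when $\Uc$ is not geometric (so all non-preemptive sharp restarts are beneficial), whereas your cases certify beneficial restart only at specific times ($S_0<0$, or $S_n<0$ past $n_0$) --- which is all the proposition as stated requires.
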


\begin{proof}
   The proof follows the same form as that of Proposition \ref{prop:log-concave}. Simply assuming that there exists some $n^*$ such that $S_{n^*} \ge 0$ and reversing all the inequalities (paying special attention to which inequalities are strict) leads to the following conclusion: either $S_n = 0$ for all $n \ge 0$ (with $r_n$ constant) or we have a contradiction. Thus, we have the result. As a special note, if $\Uc$ is not geometrically distributed, then $S_n < 0$ for all $n \ge 0$, and all non-preemptive sharp restarts are beneficial. 
\end{proof}

\subsubsection{Example: Zeta distribution supported on $\naturals$}
If we consider our previous example again, we can see that the zeta distribution on $\{0,1,2,\ldots\}$ is log-convex. Since $r_n$ is non-constant, we have $S_n < 0$ for all $n \ge 0$ and all non-preemptive sharp restarts are beneficial.

\subsubsection{Example: Negative Binomial}
For $r\in\integers^+$ and $p\in[0,1]$, we define $u(n) = \binom{n+r-1}{n}p^n(1-p)^r$. We can then compute $r_n = \frac{p(n+r)}{(n+1)}$, which is weakly decreasing in $n$ over its support and therefore $\Uc$ has a log-concave PMF and doesn't permit beneficial restart. In particular, if $r = 1$, then $u(n)$ reduces to $(1-p)p^n$, the geometric distribution.

While the standard model with $r\in\integers^+$ has a very natural interpretation, of some interest is the case where we relax that condition and instead let $r$ take values in $(0,\infty)$. It's not precisely clear what this process is modeling, but it is still a well-defined probability distribution. And, as soon as $r$ passes below 1, we see that $r_n$ becomes monotonically increasing and thus $u(n)$ is log-convex.

In summary, for $r \ge 1$, beneficial restart is impossible, but all non-preemptive sharp restarts become beneficial with $r < 1$.

\section{Mixture Distributions}
Considering the restart sequence generated by a mixture distribution can allow us to take a different angle for approaching some distributions that are more easily understood as the convex linear combination of possibly simpler distributions, and also to explore perturbations of known distributions.

Let's consider a case when distribution of our FP process is the convex linear combination of two other distributions; that is, we have FP time $\Yc$ with PMF given by $y(n) = w_1x_1(n) + w_2x_2(n)$ with $w_1 + w_2 = 1$ and $x_1(n), x_2(n)$ the PMF of two independent FP times $\Xc_1$ and $\Xc_2$. We can immediately see that the cumulative distributions have the same pattern, $Y(n) = w_1X_1(n) + w_2X_2(n)$, and as a result, we have that $\expectation[\Yc] = w_1\expectation[\Xc_1] + w_2\expectation[\Xc_2]$. Notice that we are not saying $\Yc = w_1\Xc_1 + w_2\Xc_2$. $\Yc$ is not the blending of two random variables, but rather a random variable drawn from a blended distribution.

Our interest is now in determining whether this new distribution will permit beneficial restart. To this end we compute the partial sums, $S_n^\Yc$, given the sums $S_n^{\Xc_1}$ and $S_n^{\Xc_2}$. We then consider the circumstances (if any) under which $S_n^\Yc < 0$.
\begin{align*}
    S_n^\Yc & = \sum_{k=0}^n\left[ 1 - Y(k) - \expectation[\Yc]y(k) \right] \\
    & = \sum_{k=0}^n \left[(w_1 + w_2) - (w_1X_1(k) + w_2X_2(k)) \right. \\
    & \quad \quad \quad - (w_1\expectation[\Xc_1]x_1(k) + w_2\expectation[\Xc_2]x_2(k)) \\
    & \quad \quad \quad + (w_1\expectation[\Xc_1]x_1(k) + w_2\expectation[\Xc_2]x_2(k)) \\
    & \quad \quad \quad - \left. (w_1\expectation[\Xc_1] + w_2\expectation[\Xc_2])(w_1x_1(k) + w_2x_2(k))\right] \\
    & = w_1S_n^{\Xc_1} + w_2S_n^{\Xc_2} \\
    & \quad \quad \quad + w_1w_2\sum_{k=0}^n(\expectation[\Xc_1] - \expectation[\Xc_2])(x_1(k) - x_2(k))
\end{align*}
Computing the last sum gives us the formula we desire:
\begin{equation} \label{eq:S_n^Y}
    \scriptsize{
    S_n^\Yc = w_1S_n^{\Xc_1} + w_2S_n^{\Xc_2} + w_1w_2(\expectation[\Xc_1] - \expectation[\Xc_2])(X_1(n)-X_2(n)).}
\end{equation}
From (\ref{eq:S_n^Y}) we can make several observations. Among them we have:
\begin{itemize}
    \item Suppose $\expectation[\Xc_1] = \expectation[\Xc_2]$. Then $S_n^\Yc$ reduces to $w_1S_n^{\Xc_1} + w_2S_n^{\Xc_2}$. In the case where $S_n^{\Xc_1},S_n^{\Xc_2} \ge 0$, then so is $S_n^\Yc$, and $\Yc$ does not permit beneficial restart. In the opposite case, where $S_n^{\Xc_1},S_n^{\Xc_2}<0$, again $S_n^\Yc$ follows and beneficial restart is possible.
    \item Without loss of generality, suppose that $\expectation[\Xc_1] > \expectation[\Xc_2]$. Equivalently, 
    \begin{align*}
        \sum_{n\ge0} (1 - X_1(n)) - (1 - X_2(n)) & > 0 \\
        \sum_{n\ge0} (X_2(n) - X_1(n)) > 0.
    \end{align*}
    Thus we know $X_1(n) - X_2(n) < 0$ for at least some $n\ge0$, which implies that the last term in (\ref{eq:S_n^Y}) must be negative for some $n\ge0$.
    \item The derivation of (\ref{eq:S_n^Y}) assumed that the linear combination was convex, but it need not be. Provided that $w_1+w_2=1$ and $y(n)$ is still a valid probability mass function, then we can allow for $w_1,w_2<0$. This is useful for analyzing perturbations of distributions, but also for constructing certain complicated distributions.
\end{itemize}

\subsection{Some Examples}
\subsubsection{Example: Mixture of two point masses}
Let $x_1(n) = \delta_{M_1,n}$ and $x_2(n) = \delta_{M_2,n}$ for $M_1,M_2\in\naturals$. Then $X_1(n) = \one_{[M_1,\infty)}(n)$ and $\expectation[\Xc_1] = M_1$, and similar for $\Xc_2$. Furthermore, we know that these purely deterministic FP distributions do not permit beneficial restart, which means that $S_n^{\Xc_1}\ge0$ for all $n\ge0$, and we can confirm that by computing
\begin{align*}
    S_n^{\Xc_1} & = \sum_{k=0}^n 1 - X_1(k) - \expectation[\Xc_1]x_1(k) \\
    & = \sum_{k=0}^n \one_{[0,M_1)}(n) - M_1\delta_{M_1,n} \\
    & = \begin{cases}
       n + 1 & \text{ if } 0 \le n < M_1 \\
       0 & \text{ if } M_1 \le n
    \end{cases} \ge 0 \text{ for all } n\ge0.
\end{align*}

Now, let's suppose that $M_2 > M_1$, and define $y(n) \coloneqq w_1x_1(n) + w_2x_2(n)$ for some appropriate $w_1,w_2$. By (\ref{eq:S_n^Y}), we have 
\begin{equation*}
    S_n^\Yc = w_1S_n^{\Xc_1} + w_2S_n^{\Xc_2} + w_1w_2(M_2-M_1)(X_2(n)-X_1(n)).
\end{equation*}
The difference in cumulative partial sums reduces to 
\begin{equation*}
    X_2(n) - X_1(n) = \begin{cases} 0 & \text{ for } 0\le n < M_1 \\ -1 & \text{ for } M_1 \le n < M_2 \\ 0 & \text{ for } M_2 \le n \end{cases}.
\end{equation*}
Thus the partial sums collapse to 
\begin{equation*}
    S_n^\Yc =  
    \begin{cases}
       n + 1 & \text{ for } 0 \le n < M_1 \\
       w_2\left( n + 1 - w_1(M_2 - M_1)\right) & \text{ for } M_1 \le n < M_2 \\
       0 & \text{ for } M_2 \le n
    \end{cases}.
\end{equation*}
The partial sums are clearly non-negative for $n$ smaller than $M_1$ and greater than or equal to $M_2$, so the only place we need look for beneficial restart is for $M_1 \le n < M_2$. In this region, $S_n^\Yc<0$ iff
\begin{align*}
    0 & > w_2\left( n + 1 - w_1(M_2 - M_1)\right) \\
     n & < w_1(M_2 - M_1) - 1,
\end{align*}
which is only possible when $M_1 < w_1(M_2 - M_1) - 1$, or equivalently $M_2 > \frac{(1+w_1)M_1 + 1}{w_1}$, which eliminates the possibility of $M_1$ and $M_2$ adjacent. This result clearly coincides with the one in Section \ref{ex:two_point_mixture}. 

\subsubsection{Example: Mixture of two geometric distributions on $\naturals$}
Let $x_1(n) = \rho(1-\rho)^n$ and $x_2(n) = \gamma(1-\gamma)^n$ with $\gamma \ge \rho$ so $\expectation[\Xc_1] \ge \expectation[\Xc_2]$. We note that $X_1(n) = 1 - \left(1-\rho\right)^{n+1}$ and $\expectation[\Xc_1] = \frac{1-\rho}{\rho}$ with similar results for $\Xc_2$. Computing the partial sums yields 
\begin{equation*}
    S_n^{\Xc_1} = \sum_{k=0}^n 1 - \left(1 - (1-\rho)^{n+1}\right) - \frac{1-\rho}{\rho}\cdot\rho(1-\rho)^n = 0.
\end{equation*}
Just as previously noted, the geometric distribution starting at 0 is the threshold distribution, with its partial sums equal to 0 for all $n \ge 0$. This simplifies (\ref{eq:S_n^Y}) to 
\begin{align*}
    S_n^\Yc & = w_1w_2(\expectation[\Xc_1] - \expectation[\Xc_2])(X_1(n) - X_2(n)) \\
    & = \frac{w_1w_2}{\rho\gamma}(\gamma - \rho)((1-\gamma)^{n+1} - (1-\rho)^{n+1}).
\end{align*}
From the above, it is clear that $S_n^\Yc \le 0$ for all $n\ge0$ with equality only when $\gamma = \rho$ (and in that case, clearly $\Yc$ is simply geometric and we know its partial sums to be $0$). Thus, for $\gamma\neq\rho$, we have that beneficial restart is possible -- in particular, sharp restart is beneficial for all $N\ge1$.

\subsubsection{Example: Perturbation by geometric distribution}
Let FP time $\Xc_1$ have some arbitrary PMF $x_1(n)$, and $\Xc_2$ be distributed geometrically on $\naturals$ with rate parameter $\gamma$. Since the restart sequence for $\Xc_2$ is identically $0$, we can easily compute the partial sums for $\Yc$.
\begin{equation*}
    S_n^\Yc = w_1S_n^{\Xc_1} + w_1w_2(\expectation[\Xc_1] - \frac{1-\gamma}{\gamma})(X_1(n) + (1-\gamma)^{n+1} - 1)
\end{equation*}
In the case where $\gamma$ is selected so that $\expectation[\Xc_2]=\expectation[\Xc_1]$, the second term is clearly 0, and we have simply $S_n^\Yc = w_1S_n^{\Xc_1}$. In other words, for any FP $\Xc_1$ with finite mean, there exists $\gamma \in (0,1)$ such that a perturbation by the geometric distribution with rate $\gamma$ only scales the restart sequence by $w_1$, and thus does not affect whether restart is beneficial at all.


\section{Conclusion}

Using the construct of the restart sequence allows us to understand the nature of FPUR processes. In particular, it allows us to see that any FP process with a log-concave PMF will not permit beneficial restart, and any process with a log-convex PMF will, except one which is geometrically distributed, which isn't affected by restart at all. Additionally, if the tail of the PMF decays more quickly than geometrically, late restarts won't be beneficial. On the other hand, if the PMF has a heavy tail, then all late restarts will be beneficial.
Much investigation remains to be done. Can this tool be extended to the continuous time case? Can the restart sequence provide useful information about FP processes with highly fragmented supports? Additionally, exploring the kinds of perturbations to FP time distributions that can be analyzed using the restart sequence can be explored moving forward.

\nocite{*}

\bibliography{references}

\end{document}